\documentclass[11pt]{article}
\usepackage{amsmath,indentfirst,enumitem,amsthm,amssymb,tikz,color,geometry,float,ragged2e,mathtools,graphicx,url,bigstrut,bm,booktabs,caption,subcaption}
\usepackage[utf8]{inputenc}
\usepackage{color}
\usepackage{amsfonts}
\usepackage{mathtools}
\usetikzlibrary{positioning,calc,arrows,automata}
\geometry{left=2.5cm,right=2.5cm,top=2.5cm,bottom=2.5cm}
 
\newtheorem{proposition}{PROPOSITION}

\theoremstyle{definition}

\newtheorem{example}{EXAMPLE}[section]
\theoremstyle{plain}
\linespread{1.2}
\setcounter{MaxMatrixCols}{20}

\newcommand{\R}{\mathbb{R}}

\newcommand{\z}{\mathbf{z}}

\newcommand{\h}{\mathbf{h}}

\newcommand{\one}{\mathbf{1}}
\newcommand{\x}{\mathbf{x}}
\newcommand{\y}{\mathbf{y}}

\author{Silvia Noschese\thanks{Dipartimento di Matematica, SAPIENZA Universit\`a di Roma,
P.le Aldo Moro 5, 00185 Roma, Italy. Email: \texttt{noschese@mat.uniroma1.it}}
\thanks{Corresponding author.}
\and
Lothar Reichel\thanks{Department of Mathematical Sciences, Kent State University, Kent,
OH 44242, USA. Email: \texttt{reichel@math.kent.edu}}
}

\title{Communication in Multiplex Transportation Networks\\
~\\
{\normalsize
Dedicated to our friend Michela Redivo-Zaglia on the occasion of her retirement.}
}
\date{}

\begin{document}
\setlength{\arrayrulewidth}{1pt}
\maketitle

\begin{abstract}
Complex networks are made up of vertices and edges. The edges, which may be directed or
undirected, are equipped with positive weights. Modeling complex systems that consist of 
different types of objects leads to multilayer networks, in which vertices in distinct 
layers represent different kinds of objects. Multiplex networks are special vertex-aligned
multilayer networks, in which vertices in distinct layers are identified with each other
and inter-layer edges connect each vertex with its copy in other layers and have a fixed 
weight $\gamma>0$ associated with the ease of communication between layers. This paper 
discusses two different approaches to analyze communication in a multiplex. One approach 
focuses on the multiplex global efficiency by using the multiplex path length matrix, the
other approach considers the multiplex total communicability. The sensitivity of both the
multiplex global efficiency and the multiplex total communicability to structural 
perturbations in the network is investigated to help to identify intra-layer edges that 
should be strengthened to enhance communicability.
\end{abstract}

\providecommand{\keywords}[1]
{
\small	
\textbf{Keywords:} #1
}
\keywords{Multiplex network, network analysis, total communicability, global efficiency,
sensitivity analysis, multiplex path length matrix}
\vskip5pt

\providecommand{\subclass}[1]
{\small
\textbf{Mathematics Subject Classification:} #1
}
\subclass{65F15, 65F50, 05C82}

\section{Introduction}
Multilayer networks arise when one seeks to model a complex system that contains
connections and objects with distinct properties; see, e.g., \cite{DSOGA,KAB}. Multiplex
networks, or briefly \emph{multiplexes}, are special multilayer networks in which vertices
in distinct layers are identified with each other, i.e., every vertex in some layer has a 
copy in all other layers and is connected to them. Connections between vertices in 
distinct layers are furnished by \emph{inter-layer edges} that connect instances of the 
same vertex in different layers; connections between vertices in the same layer are 
represented by \emph{intra-layer} edges. 

Let the multiplex have $L$ layers and let the graph for layer $\ell$ have $N$ vertices. 
This graph is represented by an adjacency matrix $A^{(\ell)}=[a_{ij}^{(\ell)}]^N_{i,j=1}$,
whose entry $a_{ij}^{(\ell)}$ is positive if there is an edge from vertex $v_i$ to vertex
$v_j$ in layer $\ell$; if there is no such edge, then $a_{ij}^{(\ell)}=0$. The graph is 
said to be \emph{undirected} if $a_{ij}^{(\ell)}=a_{ji}^{(\ell)}$ for all 
$1\leq i,j\leq N$; otherwise the graph is \emph{directed}. When $a_{ij}^{(\ell)}>0$, this
quantity is the \emph{weight} of the edge from vertex $v_i$ to vertex $v_j$ in layer $\ell$; 
we denote this intra-layer edge by $e(v_i^{\ell}\rightarrow v_j^{\ell})$. 
A graph is said to be \emph{unweighted} if all nonvanishing 
edge-weights equal one; otherwise the the graph is weighted. All matrices $A^{(\ell)}$, 
$1\leq\ell\leq L$, that make up a multiplex are of the same size and all inter-layer edges
are undirected and have the same weight $\gamma>0$.

Applications of multiplexes include modeling transportation networks that are made up of
train and bus routes, where the train routes and bus routes define intra-layer edges in 
different layers, and the train stations and bus stops define vertices with diverse 
properties. The weight of an intra-layer edge may account for the time needed to travel 
along the road or rail represented by the edge, while the  weight $\gamma$ of the inter-layer 
edges may model the average time spent transferring between a train station and an adjacent bus 
stop; see, e.g., \cite{BS}.

A multiplex with $N$ vertices $\{v_1,v_2,\ldots,v_N\}$ and $L$ layers may be represented 
by a third-order adjacency tensor $\mathcal{A}\in\R^{N\times N\times L}$ and a parameter
$\gamma$. The horizontal slices of the tensor are the adjacency matrices $A^{(\ell)}$, 
i.e., $\mathcal{A}=[a_{ij}^{(\ell)}]_{i,j=1,2,\dots,N,\,\ell=1,2,\ldots,L}$. This 
multiplex also may be represented by a supra-adjacency matrix $B\in\R^{NL\times NL}$ with 
$N\times N$ blocks, where the adjacency matrix $A^{(\ell)}$ is the $\ell$th diagonal 
block, for $\ell=1,2,\ldots,L$, and every off-diagonal block, in position $(\ell_1,\ell_2)$
for $1\leq \ell_1,\ell_2\leq L$ and $\ell_1\ne\ell_2$, equals $\gamma \, I_N$ with the 
same $\gamma>0$. The off-diagonal block in position $(\ell_1,\ell_2)$ represents the 
inter-layer connection between the layers $\ell_1$ and $\ell_2$. Thus, 
\begin{equation}\label{Bgamma}
 B=B(\gamma)={\rm blkdiag}[A^{(1)},A^{(2)},\dots,A^{(L)}]+\gamma
(\one_L\one_L^T\otimes I_N-I_{NL}),
\end{equation}
where $\otimes$ denotes the Kronecker product. Here $I_N\in\R^{N\times N}$ is the identity
matrix, and $\one_L=[1, 1, \dots, 1]^T\in\R^L$; see \cite{DSOGA}.

In a weighted multiplex, the edge-weights depend on the network model. In this paper, the
weight associated with each intra-layer edge accounts for its ``importance'', e.g., the 
number of flights along the route modeled by the edge, or the width of a highway segment 
modeled by the edge.  Then the larger the edge-weight, the easier is the communication 
between the vertices that are connected by the edge. As an example, let there be an edge 
$e(v_i^{\ell}\rightarrow v_j^{\ell})$ with weight $a_{ij}^{(\ell)}$. If this weight 
equals the number of flights from vertex $v_i$ to vertex $v_j$ offered by airline $\ell$, 
then the reciprocal weight, $1/a_{ij}^{(\ell)}$, may be considered the average wait time 
between flights with this airline along this route. Then doubling the number of flights 
along a route corresponds to halving the wait time between flights along the edge.

We will need the notions of \emph{path} and \emph{walk} in a multiplex. A walk with $k+1$ 
vertices is a sequence of vertices $v_{i_1},v_{i_2},\ldots,v_{i_{k+1}}$ and an associated 
sequence of $k$ intra-layer edges  $e(v_{i_1}^{\ell_1}\rightarrow v_{i_2}^{\ell_1}),\ldots,
e(v_{i_k}^{\ell_h}\rightarrow v_{i_{k+1}}^{\ell_h})$ connected by $h$ inter-layer edges,
with $1\leq h\leq k$. The length of the walk defined by these vertices and edges is the 
sum of the \emph{reciprocal weights} of the edges that make up the walk. 
Vertices and edges of a walk may be repeated. A path is a walk in which no vertex is repeated. 
Let there be a path 
from vertex $v_i$ to vertex $v_j$. Then the distance $d(v_i,v_j)$ from vertex $v_i$ to 
vertex $v_j$ is the length of the shortest path from vertex $v_i$ to vertex $v_j$ measured
by the sum of the reciprocal weights of the edges of the path. If the multiplex is unweighted
and $\gamma=1$, then $d(v_i,v_j)$ is the number of edges in a shortest 
path from $v_i$ to $v_j$. Note 
that $d(v_i,v_j)$ may differ from $d(v_j,v_i)$; in fact, some distances might not be 
defined.

It is of interest to determine the ease of communication between vertices in a network in
a well-defined sense. We consider two approaches in our analysis of the communication in a
multiplex:
\begin{enumerate}
\item Construct the multiplex path length matrix $P=P(\gamma)=[p_{ij}]_{i,j=1}^N$ (to be 
defined in Section \ref{sec3}) and consider the multiplex average inverse geodesic length, 
\[
e_{\cal A}(\gamma)=\frac{1}{N(N-1)}\sum_{i,j\ne i}\frac{1}{p_{ij}},
\]
which we will refer to as the {\it multiplex global efficiency}; see \cite{BBV,NR3}. In
this approach an edge $e(v_i^{\ell}\rightarrow v_j^{\ell})$ is considered important when
it is \emph{efficient} to transmit information along the edge, e.g., if several paths of short
length end at vertex $v_i$ and/or several paths of short length start at vertex $v_j$. We 
will refer to this technique as the \emph{efficiency approach}.
\item Consider the \emph{multiplex total communicability}, which is defined by
\[
tc_B(\gamma)=\one_{NL}^T\exp_0(B)\one_{NL},
\]
where $\exp_0(B)$ is the modified exponential matrix, with $\exp_0(t)=\exp(t)-1$;
see \cite{BK,BS1,EHNR}. In this approach an edge $e(v_i^{\ell}\rightarrow v_j^{\ell})$ is
considered important when it is \emph{popular} in transmitting information, i.e., when 
vertex $v_i$ has several in-edges with large weight in layer $\ell$ and/or vertex $v_j$ 
has several out-edges with large weight in layer $\ell$. This technique will be 
referred to as the \emph{popularity approach}.
\end{enumerate}

To assess the sensitivity of a measure of communication between the vertices to 
perturbations in intra-layer edge-weights, we analyze  the ``structured'' sensitivity to 
changes of the positive entries of the tensor $\mathcal{A}$ to determine which intra-layer
edges should be strengthened to enhance the global efficiency or the total communicability 
the most.

It is the purpose of the present paper to discuss and compare two approaches to 
analyze communication in a multiplex. The \emph{efficiency approach} focuses on the 
multiplex global 
efficiency by using the multiplex path length matrix. This way to analyze multiplex
networks was introduced in \cite{NR3}.  We remark that in \cite{NR3} the weight 
associated with each intra-layer edge accounts for some kind of ``distance'', e.g., the
time required to travel from one location to another, the geographic distance between the
locations associated with the vertices that are connected by the edge, or the cost of 
traversing along the edge. Consequently, in \cite{NR3} the length of a walk is defined as
the sum of the weights of the edges that make up the walk. In the present paper, the 
length of a walk instead is defined as the sum of the reciprocal weights associated with
the edges of the walk. This results in a novel derivation of the multiplex path length 
matrix, which is used in the computation of the multiplex global efficiency.
New bounds for the latter are derived.

We also consider the multiplex total communicability and approximate this quantity
by the multiplex Perron communicability, which was defined in \cite{EHNR}. The application
of this measure is referred to as the \emph{popularity approach.}
The multiplex global efficiency and the Perron communicability help us identify 
intra-layer edges that should be strengthened to enhance communicability in the network. 

We are interested in comparing the efficiency and popularity approaches. 
In particular, we would like to study how sensitive the multiplex global efficiency and the
Perron communicability are to perturbations of the multiplex network. Related 
investigations of single-layer networks are presented in \cite{DLCJNR,NR4}.

This paper is organized as follows. In Section \ref{sec2} we discuss how the sparsity 
structure of the multiplex network can be exploited for sensitivity analysis. Sections 
\ref{sec3} and \ref{sec4} focus on the efficiency and popularity approaches, respectively.
Numerical examples that compare the efficiency and popularity approaches are reported in 
Section \ref{sec5}. Section \ref{sec6} contains concluding remarks.

It is a pleasure to dedicate this paper to Michela Redivo-Zaglia. She has made 
important contributions in many areas of computational mathematics including the solution 
of linear discrete ill-posed problems, handling breakdown in the Lanczos method,
tensor and network computations, extrapolation, sequence transformation, and also 
written papers and books on the history of mathematics; see, e.g.,
\cite{BMRZ,BRZ,BRZ2,BRZ3,BRZRS,BRZS0,BRZS,CRZT}.

\section{Structured multiplex Perron sensitivity analysis} \label{sec2} 
The following notions form the basis for our sensitivity analysis of multiplex networks.
Let the matrix $A\in\R^{N\times n}$ be nonnegative and irreducible. Then it follows from
the Perron-Frobenius theory that $A$ has a unique eigenvalue $\rho>0$ of largest magnitude
(the Perron root) and that the associated right and left eigenvectors, $\x$ and $\y$, 
respectively, 
\[
A \x = \rho \x,\qquad  \y^T A = \rho \y^T,
\]
can be normalized to be of unit Euclidean norm with all components positive. They are 
referred to as Perron vectors. Let $E\in\R^{N\times N}$ be a nonnegative matrix 
of unit spectral norm, $\|E\|_2=1$. Introduce a small positive parameter $\varepsilon$
and denote the Perron root of $A+\varepsilon E$ by $\rho+\delta\rho$. Then
\[
\delta\rho=\varepsilon \frac{\y^T E \x}{\y^T\x}+{\mathcal O}(\varepsilon^2)\quad\mbox{as}
\quad \varepsilon\searrow 0
\]
and
\[
\frac{\y^T E\x}{\y^T\x}=\frac{|\y^T E \x|}{\y^T\x}\leq\frac{\|\y\|_2\| E\|_2\|\x\|_2}
{\y^T\x}=\frac{1}{\y^T\x},
\]
with equality attained when $E$ is the \emph{Wilkinson perturbation} $W_N=\y\x^T$ 
associated with $\rho$; see \cite{Wi}. The quantity $\frac{1}{\y^T\x}$ is referred to as 
the \emph{condition number} of $\rho$ and denoted by $\kappa(\rho)$. We note that the 
spectral norm may be replaced by the Frobenius norm.

Consider the cone $\mathcal{S}$ of all nonnegative matrices in $\R^{N\times N}$ with a 
given sparsity structure and let $M|_{\mathcal{S}}$ denote a matrix in $\mathcal{S}$ that
is closest to a given nonnegative matrix $M$ with respect to the Frobenius norm, i.e., 
$M|_{\mathcal{S}}$ is the projection of $M$ onto the cone $\mathcal{S}$. Then 
$M|_{\mathcal{S}}$ is obtained by setting all the positive entries of $M$ outside the 
sparsity structure $\mathcal{S}$ to zero. 

Let $E\in{\mathcal{S}}$ be a nonnegative matrix of unit Frobenius norm, $\|E\|_F=1$. Then
\[
\frac{\y^T E \x}{\y^T\x}=\frac{|\y^T E \x|}{\y^T\x}\leq\frac{\|\y\|_2\| 
\|W_N|_{\mathcal{S}}\|_F\|\x\|_2}{\y^T\x}=\frac{\|W_N|_{\mathcal{S}}\|_F}{\y^T\x},
\]
with equality for the \emph{structured analogue of the Wilkinson perturbation} $W_N$,
\[
E=\frac{ W_N|_{\mathcal{S}} }{\|W_N|_{\mathcal{S}}\|_F }.
\]
This is the worst-case perturbation for the Perron root $\rho$ induced by a unit norm 
matrix $E\in{\mathcal{S}}$; see \cite{NP}. The quantity
\begin{equation}\label{kst}
\kappa^{\rm struct}(\rho)=\frac{\|W_N|_{\mathcal{S}}\|_F}{\y^T\x}=
\kappa(\rho)\|W_N|_{\mathcal{S}}\|_F
\end{equation}
is referred to as the \emph{structured condition number} of $\rho$. It satisfies 
$\kappa^{\rm struct}(\rho)\leq \kappa(\rho)$.

Assume that the matrix $A^+:=\sum_{\ell=1}^L A^{(\ell)}$ is irreducible. Then also the 
supra-adjacency matrix $B$ is irreducible. To determine which edge-weight(s) should be 
increased to enhance communicability the most, we apply the Perron-Frobenius theory. 
\begin{enumerate}
\item As for the efficiency approach, we analyze the Wilkinson perturbation $W_{N}$
associated with the Perron root of an $N\times N$ efficiency matrix (defined in 
Subsection \ref{sub33}), projected onto the cone 
$\mathcal{S}_{A^+}\subseteq\R^{N \times N}$ of all nonnegative matrices with the same 
sparsity structure as $A^+$.
\item As for the popularity approach, we analyze the Wilkinson perturbation $W_{NL}$
associated with the Perron root of the supra-adjacency matrix, projected onto the cone 
$\mathcal{S}_{B_d} \subseteq\R^{NL \times NL}$ of all nonnegative matrices with the same 
sparsity structure as $B_d:={\rm blkdiag}[A^{(1)},\dots,A^{(L)}]$.
\end{enumerate}

\section{The efficiency approach}\label{sec3} 
This section introduces the path length matrix for multiplexes and describes
how it can be applied to determine the importance of an edge. The sensitivity 
of the edge importance to perturbations of the weights is investigated.

\subsection{The multiplex $1$-path length matrix}
To construct the multiplex path length matrix $P=P(\gamma)$ associated with the given
multiplex network, we first introduce the third-order tensor 
$\mathcal{P}=[p_{ij}^{(\ell)}]_{i,j=1,2,\dots,N,\;\ell=1,2,\dots,L}\in
\R^{N\times N\times L}$ with entries 
\[
p_{ij}^{(\ell)}=\left\{\begin{array}{cc}
0,&\mbox{if~~} i=j,\\
1/a_{ij}^{(\ell)}, & \mbox{~~~~if~~} a_{ij}^{(\ell)}> 0,\\
\infty, & \mbox{~~otherwise}.
\end{array}\right.
\]
Then define the multiplex $1$-path length matrix 
\[
P^1=[p_{ij}^1]_{i,j=1}^N, \mbox{~~with~~} 
p_{ij}^1=\min_{\ell=1,2,\dots,L} p_{ij}^{(\ell)}.
\]
The entry $p_{ij}^1$ with $i\ne j$ represents the length of the shortest path from vertex 
$v_i$ to vertex $v_j$ made up of a single intra-layer edge, or equals infinity if there is
no edge in any layer from vertex $v_i$ to vertex $v_j$. For example, let $a_{ij}^{(\ell)}$
be the number of direct flights from vertex $v_i$ to vertex $v_j$ offered by airline 
$\ell$. Its reciprocal $1/a_{ij}^{(\ell)}$ can be interpreted as the average wait time 
between these flights. The extra-diagonal entry $p_{ij}^1$ then either represents the 
average wait time for any direct flight or equals infinity if no airline offers a direct 
flight.

\subsection{Constructing the multiplex $K$-path length matrix}
We discuss how to construct the multiplex $K$-path length matrix 
$P^K=P^K(\gamma)=[p^K_{ij}]_{i,j=1}^N$, whose entry $p^K_{ij}$ with $i\ne j$ is the length
of the shortest path from vertex $v_i$ to vertex $v_j$ made up of at most $K$ intra-layer 
edges. An analogous path length matrix has previously been introduced in \cite{NR2} to
investigate single-layer networks. The diagonal entries of $P^K$ are zero by definition.
We note that the multiplex path length matrix $P=[p_{ij}]_{i,j=1}^N$ satisfies 
$P\equiv P^{N-1}(\gamma)$, because in a multiplex path, the number of intra-layer edges 
is at most $N-1$.

The construction of the multiplex $K$-path length matrix uses 
\emph{min-plus matrix multiplication}, i.e., we carry out matrix multiplication in the 
tropical algebra; see \cite{L}:
\begin{equation*}
C=A\star B: \qquad c_{ij}=\min_{h=1,2,\ldots,N} \{a_{ih}+b_{hj}\},\qquad 1\leq i,j\leq N,
\end{equation*}
where $A=[a_{ij}]_{i,j=1}^N$, $B=[b_{ij}]_{i,j=1}^N$, and $C=[c_{ij}]_{i,j=1}^N$ are real
$N\times N$ matrices.

The multiplex $K$-path length matrix $P^K=[p_{ij}^K]_{i,j=1}^N$ can be constructed by 
means of min-plus powers of $P^1$. For single-layer networks, i.e., when $L=1$, the matrix
$P^K=[p^K_{ij}]_{i,j=1}^N$ is for $1<K\leq N-1$ given by
\[
p^{K}_{ij}=\min_{h=1,2,\ldots,N} 
\{p^{K-1}_{ih}+p^{1}_{hj}\}, \,\,\mbox {if}\,\, i\ne j,
\]
and $p^{K}_{ij}=0$ otherwise; see \cite{NR2}. When determining the entry $p_{ij}^{K}$ for
a multiplex, one has to include the cost $1/\gamma$ for each layer switch in the sum of 
the reciprocal weights of intra-layer edges of a path, because all intra-layer edges of a
shortest path of a multiplex do not necessarily belong to the same layer. In order to
see if such switching cost is relevant, one takes into account the layer of the last edge 
(i.e., the intra-layer edge from the penultimate vertex to the last vertex) of all shortest 
paths from vertex $v_i$ to vertex $v_h$ made up of at most $K-1$ edges (in case
$0<p^{K-1}_{ih}<\infty$). Only if there exists an edge from vertex $v_h$ to vertex $v_j$  
in layer $\ell$ (i.e., if  $0<p^{(\ell)}_{hj}<\infty$) and such layer is different from all the 
above mentioned layers, then the cost $1/\gamma$ is included in the 
computation of the length of the relevant path made up of at most $K$ intra-layer edges,
because there is a layer-switch preceding the last intra-layer edge.

\noindent
In this way the off-diagonal entries of the multiplex $K$-path length matrix 
$P^{K}=[p^{K}_{ij}]_{i,j=1}^N$, for $1<K\leq N-1$, are computed according to 
\[
{ p^{K}_{ij} =p^{K-1}_{i\bar{h}}+p^{(\bar{\ell})}_{\bar{h}j}+
\frac{1}{\gamma}  
\delta^{(\bar{\ell})}_{\bar{h}} 
~~\mbox {where}~~
(\bar{h},\bar{\ell})={\arg\min}_{h,\ell} \,
\{p^{K-1}_{ih}+p^{(\ell)}_{hj}+\frac{1}{\gamma} \delta^{(\ell)}_{h} } \},
\]
with $\delta^{(\ell)}_{h}=0$, if one of the following conditions holds:
\begin{itemize}
\item[ ] $p^{K-1}_{ih}=0$, i.e., $v_i=v_h$; 
\item[ ] $p^{K-1}_{ih}=\infty$, i.e., there is no path from vertex $v_i$ to vertex $v_h$
made up of at most $K-1$ edges;
\item[ ] $p^{(\ell)}_{hj}=0$, for all $\ell=1,2,\dots,L$, i.e., $v_h=v_j$; 
\item[ ] $p^{(\ell)}_{hj}=\infty$, for all $\ell=1,2,\dots,L$, i.e., there are no 
intra-layer edges from vertex $v_h$ to vertex $v_j$;
\item[ ] the intra-layer edge from vertex $v_h$ to vertex $v_j$ with weight 
$p^{({\ell})}_{hj}$ belongs to the same layer $\ell$ as the last edge of a shortest path 
made up of at most $K-1$ edges from vertex $v_i$ to vertex $v_h$ of length $p^{K-1}_{ih}$,
\end{itemize}
and $\delta^{(\ell)}_{h}=1$ otherwise; see \cite{NR3}.

\subsubsection{Min-plus powers versus powers}
Consider for ease of discussion an undirected and unweighted single-layer network, i.e., a 
simple graph. Let $A=[a_{ij}]_{i,j=1}^N$ be the adjacency matrix for the graph and define 
its $h$th power $A^h=[a^{(h)}_{ij}]_{i,j=1}^N$. The entry $a^{(h)}_{ij}$ counts the number
of walks of length $h$ between the vertices $v_i$ and $v_j$. Estrada and 
Rodriguez-Velazquez \cite{ER} defined the communicability between the vertices $v_i$ and
$v_j$ for $i\ne j$ as the $(i,j)$th entry of the matrix
\begin{equation}\label{exp0sum}
\exp_0(A)=\sum_{h=1}^\infty \frac{A^h}{h!}.
\end{equation}
The rapid growth of the denominator with $h$ ensures that the expansion converges and that
terms $\frac{A^h}{h!}$ with $h$ large contribute only little to $\exp_0(A)$. This is in 
agreement with the intuition that messages propagate better along short walks than along 
long ones. The $(i,i)$th entry of the sum \eqref{exp0sum} is commonly referred to as the 
subgraph centrality of vertex $v_i$; see \cite{ER}\footnote{In \cite{ER} $\exp(A)$ is used
instead of $\exp_0(A)$, but the term $I_N$ has no natural interpretation in network 
modeling.}.

We turn to the min-plus power $P^{K}$. The $(i,j)$th entry of $P^{K}$ gives the length of 
the shortest path between the vertices $v_i$ and $v_j$ made up of at most $K$ edges, i.e.,
it counts the number of edges of such shortest paths (since the graph is unweighted).
Compare with the $K$th partial sum $\sum_{h=1}^K A^h/h!$, whose $(i,j)$th entry is related
to the number of walks between $v_i$ and $v_j$ made up of at most $K$ edges. Thus, when 
considering the path length matrix $P$ instead of $\exp_0(A)$, one emphasizes the 
availability of a short shortest path more than the availability of several paths that can
be used for communication, thus focusing more on efficiency than popularity.

\subsection{Estimating the multiplex global efficiency}\label{sub33}
For $1\leq K\leq N-1$, we introduce the \emph{multiplex $K$-efficiency matrix} 
$P^{K}_{-1}=[p_{ij}^{K,-1}]_{i,j=1}^N$. It is obtained by replacing the off-diagonal 
entries of the $K$-path length matrix $P^K$ by their reciprocals, i.e.,
\[
p_{ij}^{K,-1}=1/p_{ij}^{K}, \quad 1\leq i,j\leq N, \quad i\ne j.
\]
Moreover, we define the \emph{multiplex global $K$-efficiency}
\[
e_{\mathcal{A}}^{K}(\gamma)=\frac{1}{N(N-1)}\sum_{i,j\ne i} \frac{1}{p_{ij}^{K}}=
\frac{1}{N(N-1)}\one_{N}^T P^{K}_{-1}\one_{N},
\]
with $1/\infty$ identified with $0$; see \cite{NR3}. 

For the multiplex global efficiency $e_{\mathcal{A}}(\gamma)$, one has 
$$
e_{\mathcal{A}}(\gamma)=\frac{1}{N(N-1)}
\sum_{i,j\ne i} \frac{1}{p_{ij}}=\frac{1}{N(N-1)}\one_{N}^T P_{-1}\one_{N},
$$
where $P_{-1}$ will be referred to as the \emph{multiplex efficiency matrix}. The 
following result shows how we can estimate the multiplex global efficiency.

\begin{proposition}
The multiplex global $K$-efficiency, for $1\leq K\leq N-1$, satisfies the inequality
$$
e_{\mathcal{A}}^{1}(\gamma)\leq e_{\mathcal{A}}^{2}(\gamma)\leq\dots\leq 
e_{\mathcal{A}}^{N-1}=e_{\mathcal{A}}(\gamma).
$$
\end{proposition}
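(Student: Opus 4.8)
The plan is to reduce the entire chain of inequalities to a single pointwise statement about the entries of the path length matrices, namely that for every ordered pair $(i,j)$ with $i\ne j$ the quantity $p_{ij}^{K}$ is non-increasing in $K$. Once this is established, taking reciprocals (which reverses the inequality) and summing over all off-diagonal pairs yields the asserted monotonicity of $e_{\mathcal{A}}^{K}(\gamma)$, and the terminal equality will follow from the identity $P\equiv P^{N-1}(\gamma)$ already noted in the text.

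First I would fix $i\ne j$ and work from the definitional characterization of $p_{ij}^{K}$ as the length of a shortest path from $v_i$ to $v_j$ that uses at most $K$ intra-layer edges, where the length of a given path is the sum of the reciprocal intra-layer edge-weights plus $1/\gamma$ for each layer switch. The crucial observation is that this length is an intrinsic property of the path itself and does not depend on the bound $K$; the role of $K$ is only to delimit the feasible set of competing paths. Since every path using at most $K-1$ intra-layer edges also uses at most $K$ of them, the feasible set defining $p_{ij}^{K}$ contains the feasible set defining $p_{ij}^{K-1}$. Minimizing the same length functional over a larger set can only decrease (or leave unchanged) the minimum, so $p_{ij}^{K}\le p_{ij}^{K-1}$, with the convention that the minimum over the empty set equals $+\infty$.

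Next I would pass to reciprocals, keeping the convention $1/\infty=0$. If $p_{ij}^{K-1}=\infty$, then $1/p_{ij}^{K-1}=0\le 1/p_{ij}^{K}$ because the right-hand side is nonnegative; if $p_{ij}^{K-1}<\infty$, then $0<p_{ij}^{K}\le p_{ij}^{K-1}<\infty$ and the monotonicity of $t\mapsto 1/t$ on $(0,\infty)$ gives $1/p_{ij}^{K-1}\le 1/p_{ij}^{K}$. In either case $1/p_{ij}^{K-1}\le 1/p_{ij}^{K}$. Summing this inequality over all pairs $i\ne j$ and dividing by $N(N-1)$ yields $e_{\mathcal{A}}^{K-1}(\gamma)\le e_{\mathcal{A}}^{K}(\gamma)$ for $2\le K\le N-1$, which is precisely the claimed chain.

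Finally, I would address the terminal equality. Because a multiplex path repeats no vertex, it contains at most $N$ distinct vertices and hence at most $N-1$ intra-layer edges; consequently raising the edge bound beyond $N-1$ introduces no new feasible paths, so $p_{ij}^{N-1}=p_{ij}$ for all $i,j$, i.e.\ $P\equiv P^{N-1}(\gamma)$. This gives $e_{\mathcal{A}}^{N-1}(\gamma)=e_{\mathcal{A}}(\gamma)$ and closes the chain. I expect the only genuinely delicate point to be the pointwise step: one should argue the monotonicity directly from the ``minimum over feasible paths'' description rather than by inducting on the layer-switch recursion, since the switching penalty $\delta^{(\ell)}_{h}/\gamma$ is history-dependent and could obscure the comparison if one attempted to manipulate the recursion formula. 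Reasoning with the length functional on fixed paths sidesteps this difficulty entirely.
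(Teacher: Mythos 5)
Your proposal is correct and follows essentially the same route as the paper: the paper's one-line proof simply asserts that by construction $P_{-1}^{K}\leq P_{-1}^{K+1}$ entrywise and that $P_{-1}\equiv P_{-1}^{N-1}$, which is exactly the pointwise monotonicity $1/p_{ij}^{K-1}\leq 1/p_{ij}^{K}$ and terminal identification you establish before summing. The only difference is that you justify the entrywise claim explicitly via the nesting of the feasible path sets (a sensible precaution, since arguing from the layer-switch recursion would be messier), whereas the paper takes it as immediate from the construction.
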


\begin{proof}
The proof follows by observing that by construction $P_{-1}^K\leq P_{-1}^{K+1}$, for 
$1\leq K<N-1$, and that $P_{-1}\equiv P_{-1}^{N-1}$.
\end{proof}

\subsection{Increasing the multiplex efficiency}
We would like to identify the intra-layer edges that should be strengthened to enhance the
multiplex global efficiency the most and will refer to these edges as ``efficient''. 
Consider the nonnegative vectors $\h_{\rm in}\in\R^N$ and $\h_{\rm out}\in\R^N$ defined 
as follows: the $i$th entry of the former is the harmonic ${\rm in}$-centrality 
$\sum_{h\ne i} 1/p_{hi}$ of vertex $v_i$ and the $i$th entry of the latter is its harmonic
${\rm out}$-centrality $\sum_{h\ne i} 1/p_{ih}$; see  \cite{BBV}. The following  results 
hold.

\begin{proposition}\label{gl_harm}
For the multiplex global efficiency and the harmonic in- and out-centralities, we have
$$
e_{\mathcal{A}}(\gamma)=\frac{1}{N(N-1)}\|\h_{\rm in}\|_1=
\frac{1}{N(N-1)}\|\h_{\rm out}\|_1.
$$
\end{proposition}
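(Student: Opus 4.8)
The plan is to recognize that both claimed identities amount to writing the single double sum $\sum_{i,j\ne i} 1/p_{ij}$ in two different ways, grouping the ordered pairs either by their target vertex or by their source vertex. First I would expand the two $1$-norms. Since $\h_{\rm in}$ and $\h_{\rm out}$ are nonnegative by construction (each entry is a sum of reciprocals $1/p_{hi}\ge 0$, with $1/\infty$ read as $0$), the $1$-norm is simply the sum of the entries, so that
\[
\|\h_{\rm in}\|_1=\sum_{i=1}^N(\h_{\rm in})_i=\sum_{i=1}^N\sum_{h\ne i}\frac{1}{p_{hi}},
\qquad
\|\h_{\rm out}\|_1=\sum_{i=1}^N(\h_{\rm out})_i=\sum_{i=1}^N\sum_{h\ne i}\frac{1}{p_{ih}}.
\]

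Next I would reindex each iterated sum as a single sum over the set of ordered pairs $\{(i,j):1\le i,j\le N,\ i\ne j\}$. For the in-centrality sum, the summand $1/p_{hi}$ ranges over all pairs $(h,i)$ with $h\ne i$; relabeling $(h,i)\mapsto(i,j)$ turns it into $\sum_{i,j\ne i}1/p_{ij}$. For the out-centrality sum, $1/p_{ih}$ ranges over all pairs $(i,h)$ with $i\ne h$; the relabeling $(i,h)\mapsto(i,j)$ again produces $\sum_{i,j\ne i}1/p_{ij}$. By the definition of the multiplex global efficiency, both iterated sums therefore equal $N(N-1)\,e_{\mathcal{A}}(\gamma)$, and dividing through by $N(N-1)$ yields the two stated equalities.

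There is essentially no analytic obstacle: the result is a double-counting (Fubini-type) identity, and the only point requiring a word of care is the treatment of the infinite entries $p_{ij}=\infty$, for which $1/p_{ij}$ is identified with $0$ exactly as in the definition of $e_{\mathcal{A}}(\gamma)$. With that convention fixed the reindexing is unconditional, because the diagonal terms $i=j$ are excluded in all three sums; hence grouping the summation by source vertex and grouping it by target vertex partition the very same index set and must return the same total.
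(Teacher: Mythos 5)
Your proposal is correct and follows essentially the same route as the paper: both arguments use the nonnegativity of the entries $1/p_{ij}$ to identify each $1$-norm with the plain sum of the entries, and then observe that summing by rows or by columns of the efficiency matrix $P_{-1}$ gives the same total $\one_N^T P_{-1}\one_N = N(N-1)\,e_{\mathcal{A}}(\gamma)$. Your explicit remark about the convention $1/\infty=0$ is a sensible addition but does not change the substance.
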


\begin{proof}
Since all the entries of the multiplex efficiency matrix $P_{-1}$ are nonnegative 
quantities, one has that every entry of both harmonic in- and out-centrality vectors 
equals its modulus. Thus, the equalities 
$\|\h_{\rm in}\|_1=\|\h_{\rm out}\|_1=\one_{N}^T P_{-1}\one_{N}$ give the desired 
equality.
\end{proof}

\begin{proposition}\label{rho_harm}
For the Perron root of the multiplex efficiency matrix and the harmonic in- and 
out-centrality vectors, one has
$$
\rho\leq\min\{ \|\h_{\rm in}\|_{\infty}, \|\h_{\rm out}\|_{\infty}\}.
$$
\end{proposition}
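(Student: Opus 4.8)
The plan is to reinterpret the two harmonic centrality vectors as the vectors of column sums and row sums of the nonnegative efficiency matrix $P_{-1}$, and then to invoke the classical bound that the spectral radius of a matrix does not exceed any of its induced operator norms. First I would observe that, since the off-diagonal entries of $P_{-1}$ are $(P_{-1})_{hi}=1/p_{hi}$ and its diagonal vanishes, the $i$th entry of $\h_{\rm in}$, namely $\sum_{h\ne i}1/p_{hi}$, is exactly the $i$th column sum of $P_{-1}$, while the $i$th entry of $\h_{\rm out}$, namely $\sum_{h\ne i}1/p_{ih}$, is exactly the $i$th row sum. Equivalently, $\h_{\rm in}^T=\one_N^T P_{-1}$ and $\h_{\rm out}=P_{-1}\one_N$.

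Taking the $\infty$-norm of these two vectors then produces the largest column sum and the largest row sum of $P_{-1}$, respectively. Because every entry of $P_{-1}$ is nonnegative, these quantities coincide with the induced matrix $1$-norm and the induced matrix $\infty$-norm:
$$
\|\h_{\rm in}\|_{\infty}=\max_{j}\sum_{i}(P_{-1})_{ij}=\|P_{-1}\|_1,\qquad
\|\h_{\rm out}\|_{\infty}=\max_{i}\sum_{j}(P_{-1})_{ij}=\|P_{-1}\|_{\infty}.
$$

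The final step is to recall that for any matrix norm $\|\cdot\|$ induced by a vector norm one has $\rho(M)\le\|M\|$, applied here to $M=P_{-1}$, whose Perron root is the spectral radius $\rho$. This yields simultaneously $\rho\le\|P_{-1}\|_1=\|\h_{\rm in}\|_{\infty}$ and $\rho\le\|P_{-1}\|_{\infty}=\|\h_{\rm out}\|_{\infty}$, and taking the smaller of the two bounds gives the assertion. There is no genuine obstacle here; the only point requiring a little care is to match the conventions correctly, so that the \emph{in}-centrality (a sum over the first index, i.e.\ over incoming reciprocal distances) is paired with the column sums and hence with the $1$-norm, and the \emph{out}-centrality with the row sums and the $\infty$-norm. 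Interchanging them still produces a valid pair of upper bounds for $\rho$, but the identification above is the one that makes the two norm bounds and the harmonic centralities correspond naturally.
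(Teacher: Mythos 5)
Your argument is correct and is essentially identical to the paper's proof: both identify $\|\h_{\rm in}\|_{\infty}$ and $\|\h_{\rm out}\|_{\infty}$ with $\|P_{-1}\|_1$ and $\|P_{-1}\|_{\infty}$, respectively, and then use the fact that the spectral radius is bounded by any induced matrix norm. Your version simply spells out the row/column-sum identification more explicitly than the paper does.
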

\begin{proof}
One can see that $\|P_{-1}\|_1=\|\h_{\rm in}\|_{\infty}$ and 
$\|P_{-1}\|_{\infty}=\|\h_{\rm out}\|_{\infty}$. Since the spectral radius is less than or
equal to any natural matrix norm, it follows that $\rho\leq \|P_{-1}\|_1$ and 
$\rho\leq \|P_{-1}\|_{\infty}$. This yields the desired inequality. 
\end{proof}

\begin{proposition}\label{rho_gl}
For the Perron root of the multiplex efficiency matrix and the multiplex global 
efficiency, one has
$$
\rho\leq N(N-1)e_{\mathcal{A}}(\gamma).
$$
\end{proposition}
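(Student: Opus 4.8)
The plan is to chain together the two preceding propositions rather than to argue from scratch. Proposition~\ref{rho_harm} already gives $\rho\leq\min\{\|\h_{\rm in}\|_\infty,\|\h_{\rm out}\|_\infty\}$, and Proposition~\ref{gl_harm} expresses $N(N-1)e_{\mathcal A}(\gamma)$ as $\|\h_{\rm in}\|_1$ (equivalently $\|\h_{\rm out}\|_1$). So the whole claim reduces to the elementary fact that for a nonnegative vector the $\infty$-norm never exceeds the $1$-norm.

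First I would invoke Proposition~\ref{rho_harm} to write $\rho\leq\|\h_{\rm in}\|_\infty$. Next, since $\h_{\rm in}$ is a nonnegative vector in $\R^N$, each of its entries is bounded above by the sum of all its entries, so $\|\h_{\rm in}\|_\infty\leq\|\h_{\rm in}\|_1$. Then Proposition~\ref{gl_harm} identifies $\|\h_{\rm in}\|_1$ with $N(N-1)e_{\mathcal A}(\gamma)$. Composing these three inequalities in order yields
\[
\rho\leq\|\h_{\rm in}\|_\infty\leq\|\h_{\rm in}\|_1=N(N-1)e_{\mathcal A}(\gamma),
\]
which is exactly the assertion. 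One could equally run the argument through $\h_{\rm out}$, and in fact taking the minimum over the two choices recovers the sharper intermediate bound from Proposition~\ref{rho_harm} before the final $1$-norm relaxation.

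Honestly, there is no real obstacle here: the only thing to verify is the norm inequality $\|\,\cdot\,\|_\infty\leq\|\,\cdot\,\|_1$, which holds for \emph{any} vector but is cleanest to state in the nonnegative case, where the $1$-norm is simply the sum of entries. The nonnegativity of $\h_{\rm in}$ (and $\h_{\rm out}$) was already noted in the proof of Proposition~\ref{gl_harm}, since the entries of $P_{-1}$ are nonnegative. If anything, the mild subtlety is bookkeeping the factor $N(N-1)$ correctly, but that is carried transparently by Proposition~\ref{gl_harm}. I would therefore present the result as an immediate corollary-style deduction, spending at most one displayed line on the chain of inequalities.
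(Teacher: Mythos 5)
Your argument is correct and is essentially identical to the paper's own proof: both chain Proposition~\ref{rho_harm}, the elementary inequality $\|\z\|_\infty\leq\|\z\|_1$, and Proposition~\ref{gl_harm}. No further comment is needed.
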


\begin{proof}
By Proposition \ref{rho_harm}, both $\rho\leq \|\h_{\rm in}\|_1$ and 
$\rho\leq \|\h_{\rm out}\|_1$ hold, having observed that for any vector $\z$ one has 
$\|\z\|_{\infty}\leq\|\z\|_1$. The proof now follows by Proposition \ref{gl_harm}.
\end{proof}

The above results lead us to expect that the multiplex global efficiency increases the 
most by increasing the edge-weights that make $\rho$ increase the most. We therefore 
should strengthen the existing edges $e(v_i^{\ell}\rightarrow v_j^{\ell})$, for 
$\ell = 1,2 \dots, L$, determined by the $(i,j)$th entry of 
$A^+=\sum_{\ell=1}^L A^{(\ell)}$ that corresponds to a largest entry of the Wilkinson 
perturbation $W_{N}$ associated with $\rho$, projected onto the cone 
${\mathcal S_{A^+}}$.
 
Since multiplexes typically have a large number of vertices, we focus on techniques that 
are well suited for large-scale networks. In case the computation of the path length 
matrix $P\equiv P^{N-1}$ is too expensive to be attractive, one may instead consider the
Perron root $\rho_K$ of the multiplex $K$-efficiency matrix $P^{K}_{-1}$ and the 
associated Perron vectors $\mathbf{x}_K$ and $\mathbf{y}_K$, and determine the Wilkinson 
matrix $W^K_{N}=\mathbf{y}_K\mathbf{x}_K^T$, for $1\leq K\leq N-1$ large enough, to 
identify edges whose strengthening may be advantageous; see \cite{NR4}. This leads us to
propose to strengthen existing edges $e(v_i^{\ell}\rightarrow v_j^{\ell})$, for 
$\ell=1,2,\dots,L$, determined by the $(i,j)$th entry of $A^+$ that correspond to a 
largest entry of the Wilkinson perturbation $W^K_{N}$ projected onto the cone 
${\mathcal S_{A^+}}$. The following result motivates this approach.

\begin{proposition}
One has $\rho_K\leq\rho_{K+1}\leq \rho$, for $1\leq K< N-1$.
\end{proposition}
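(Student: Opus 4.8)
The plan is to reduce the claim to the standard monotonicity of the spectral radius under entry-wise domination of nonnegative matrices, so that the only real content is importing the right comparison theorem. The two ingredients I would assemble are: (i) the entry-wise chain of inequalities among the efficiency matrices, already recorded in the proof of the first proposition of Subsection \ref{sub33}, namely $P^K_{-1}\leq P^{K+1}_{-1}$ for $1\leq K<N-1$, together with $P_{-1}\equiv P^{N-1}_{-1}$; and (ii) the comparison theorem for nonnegative matrices asserting that $0\leq A\leq B$ entry-wise implies $\rho(A)\leq\rho(B)$ for the corresponding spectral radii.

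First I would observe that each $P^K_{-1}$ is a nonnegative matrix: its off-diagonal entries are reciprocals of shortest-path lengths, with $1/\infty$ read as $0$, and its diagonal entries vanish. Hence Perron--Frobenius theory guarantees that the spectral radius of $P^K_{-1}$ is attained by a nonnegative real eigenvalue, which is precisely the Perron root $\rho_K$. Applying (ii) to the consecutive pair $P^K_{-1}\leq P^{K+1}_{-1}$ yields $\rho_K\leq\rho_{K+1}$, the first inequality. Chaining the entry-wise inequalities in (i) up to $N-1$ gives $P^{K+1}_{-1}\leq P^{N-1}_{-1}=P_{-1}$, and a second application of (ii) produces $\rho_{K+1}\leq\rho_{N-1}=\rho$, the second inequality.

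The hard part will be the justification of (ii), since the entry-wise monotonicity in (i) is already in hand. I would invoke the classical comparison theorem for nonnegative matrices; if a self-contained argument is preferred, I would combine Gelfand's formula $\rho(M)=\lim_{m\to\infty}\|M^m\|^{1/m}$ with the elementary facts that $0\leq A\leq B$ entry-wise implies $0\leq A^m\leq B^m$ entry-wise (by induction, using that sums and products of nonnegative matrices respect the entry-wise order) and that an entry-wise-monotone norm such as the Frobenius norm satisfies $\|A^m\|\leq\|B^m\|$. Taking $m$th roots and letting $m\to\infty$ then gives $\rho(A)\leq\rho(B)$.

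Finally, I would flag one point of care. Irreducibility of $P^K_{-1}$ is not guaranteed for small $K$, since a shortest path using at most $K$ intra-layer edges need not connect every ordered pair of vertices; thus the strictly positive Perron eigenvector assumed elsewhere may fail to exist for intermediate $K$. This causes no difficulty here, because the spectral-radius inequality (ii) holds for all nonnegative matrices regardless of irreducibility, and $\rho_K$ in the statement should simply be understood as the spectral radius of $P^K_{-1}$. I would add a brief remark to this effect so that the argument covers every $K$ in the stated range.
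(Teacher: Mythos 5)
Your argument is correct and follows essentially the same route as the paper, which likewise deduces the claim from the entry-wise ordering $P_{-1}^K\leq P_{-1}^{K+1}$ together with $P_{-1}\equiv P_{-1}^{N-1}$, leaving the nonnegative-matrix comparison theorem implicit. Your explicit justification of that comparison step (and the remark that irreducibility of $P_{-1}^K$ is not needed, only nonnegativity) merely fills in details the paper omits.
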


\begin{proof}
The proof follows by observing that $P_{-1}^K\leq P_{-1}^{K+1}$, for $1\leq K<N-1$, and 
that $P_{-1}\equiv P_{-1}^{N-1}$.
\end{proof}

Moreover, consider the vector $\h^K_{\rm in}\in\R^N$ whose $i$th entry is the harmonic 
$K_{\rm in}$-centrality $\sum_{h\ne i} 1/p^K_{hi}$ of $v_i$ and the vector 
$\h^K_{\rm out}\in\R^N$ whose $i$th entry is its harmonic $K_{\rm out}$-centrality 
$\sum_{h\ne i} 1/p^K_{ih}$; see  \cite{NR4}. The following propositions can be shown 
similarly as Propositions \ref{gl_harm}, \ref{rho_harm}, and \ref{rho_gl}.

\begin{proposition}
For the multiplex global $K$-efficiency and the harmonic $K_{\rm in}$- and 
$K_{\rm out}$-centralities, the following equalities hold
$$
e_{\mathcal{A}}^K(\gamma)=\frac{1}{N(N-1)}\|\h^K_{\rm in}\|_1=
\frac{1}{N(N-1)}\|\h^K_{\rm out}\|_1,  \quad 1\leq K\leq N-1.
$$
\end{proposition}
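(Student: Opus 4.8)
The plan is to mimic exactly the proof of Proposition \ref{gl_harm}, since the final statement is the $K$-truncated analogue of that result. The key observation is that the multiplex $K$-efficiency matrix $P^K_{-1}=[p_{ij}^{K,-1}]_{i,j=1}^N$ has entirely nonnegative entries: its off-diagonal entries are the reciprocals $1/p_{ij}^K$ (with $1/\infty$ identified with $0$), all of which are nonnegative, and its diagonal entries are zero. The harmonic $K_{\rm in}$-centrality of vertex $v_i$ is $\sum_{h\ne i}1/p^K_{hi}$, which is precisely the $i$th column sum of $P^K_{-1}$, while the harmonic $K_{\rm out}$-centrality $\sum_{h\ne i}1/p^K_{ih}$ is the $i$th row sum.

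First I would note that because every entry of $P^K_{-1}$ is nonnegative, each component of both $\h^K_{\rm in}$ and $\h^K_{\rm out}$ equals its own absolute value, so the $1$-norms of these vectors are simply the sums of their entries. Summing the column sums or the row sums of $P^K_{-1}$ both recover the total $\one_N^T P^K_{-1}\one_N$, giving
\[
\|\h^K_{\rm in}\|_1=\|\h^K_{\rm out}\|_1=\one_N^T P^K_{-1}\one_N.
\]
Then I would invoke the definition of the multiplex global $K$-efficiency from Subsection \ref{sub33}, namely $e_{\mathcal{A}}^K(\gamma)=\frac{1}{N(N-1)}\one_N^T P^K_{-1}\one_N$, to conclude the stated chain of equalities, valid for every $K$ in the range $1\leq K\leq N-1$.

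There is essentially no obstacle here: the argument is a direct transcription of the $K=N-1$ case treated in Proposition \ref{gl_harm}, with $P_{-1}$ replaced by $P^K_{-1}$, $\h_{\rm in}$ by $\h^K_{\rm in}$, and $\h_{\rm out}$ by $\h^K_{\rm out}$. The only point worth checking carefully is the nonnegativity of $P^K_{-1}$ and the bookkeeping that identifies the two harmonic centrality vectors with the column and row sums of that matrix, so that the $\ell_1$ norm collapses to the scalar $\one_N^T P^K_{-1}\one_N$; the convention $1/\infty=0$ ensures this holds even when some distances $p_{ij}^K$ are infinite. Since the excerpt already states that these propositions ``can be shown similarly,'' the expected proof is exactly this short argument.
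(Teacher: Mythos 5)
Your proof is correct and is exactly the argument the paper intends: it states that this proposition "can be shown similarly" to Proposition~\ref{gl_harm}, and your transcription of that proof with $P_{-1}$, $\h_{\rm in}$, $\h_{\rm out}$ replaced by $P^K_{-1}$, $\h^K_{\rm in}$, $\h^K_{\rm out}$ is the expected route. The identification of the harmonic $K$-centralities with column and row sums of the nonnegative matrix $P^K_{-1}$, together with the convention $1/\infty=0$, is all that is needed.
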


\begin{proposition}
For the Perron root of the multiplex $K$-efficiency matrix and the harmonic $K_{\rm in}$- 
and $K_{\rm out}$-centralities one has, for $1\leq K\leq N-1$,
$$
\rho_K\leq\min\{ \|\h^K_{\rm in}\|_{\infty}, \|\h^K_{\rm out}\|_{\infty}\}.
$$
\end{proposition}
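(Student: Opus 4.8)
The plan is to mirror the argument used for Proposition \ref{rho_harm}, now applied to the multiplex $K$-efficiency matrix $P^K_{-1}$ in place of the multiplex efficiency matrix $P_{-1}$. First I would record what the entries of $P^K_{-1}$ and of the two centrality vectors are: by definition the $(i,j)$th off-diagonal entry of $P^K_{-1}$ equals $1/p^K_{ij}$ and its diagonal entries vanish, so all entries are nonnegative. Consequently the $i$th entry of $\h^K_{\rm in}$, namely $\sum_{h\ne i}1/p^K_{hi}$, is precisely the $i$th column sum of $P^K_{-1}$, while the $i$th entry of $\h^K_{\rm out}$, namely $\sum_{h\ne i}1/p^K_{ih}$, is precisely its $i$th row sum.

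Next I would identify the two induced norms of $P^K_{-1}$ with the sup-norms of these vectors. Since the matrix $1$-norm is the maximal absolute column sum and every entry of $P^K_{-1}$ is nonnegative, one gets $\|P^K_{-1}\|_1=\max_i\sum_{h\ne i}1/p^K_{hi}=\|\h^K_{\rm in}\|_\infty$; similarly, since the matrix $\infty$-norm is the maximal absolute row sum, $\|P^K_{-1}\|_\infty=\max_i\sum_{h\ne i}1/p^K_{ih}=\|\h^K_{\rm out}\|_\infty$. Finally, using that the spectral radius is bounded above by any induced matrix norm, I would conclude $\rho_K\le\|P^K_{-1}\|_1=\|\h^K_{\rm in}\|_\infty$ and $\rho_K\le\|P^K_{-1}\|_\infty=\|\h^K_{\rm out}\|_\infty$, and taking the minimum of the two upper bounds gives the claim for every $K$ with $1\le K\le N-1$.

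I do not expect any genuine obstacle, since the statement is the verbatim $K$-analogue of Proposition \ref{rho_harm} and the paper signals that it follows by the same reasoning. The only point that warrants a line of care is the diagonal convention: it is the vanishing diagonal of $P^K_{-1}$ (rather than any term $1/p^K_{ii}$, which is not defined) that makes its row and column sums coincide exactly with the harmonic $K_{\rm out}$- and $K_{\rm in}$-centralities, which are themselves defined with the restriction $h\ne i$. Making this identification explicit is all that is needed to turn the sketch into a complete proof.
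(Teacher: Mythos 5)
Your proof is correct and follows essentially the same route as the paper: the paper proves the $K=N-1$ case (Proposition \ref{rho_harm}) by identifying $\|P_{-1}\|_1$ and $\|P_{-1}\|_\infty$ with $\|\h_{\rm in}\|_\infty$ and $\|\h_{\rm out}\|_\infty$ and invoking the bound of the spectral radius by any natural matrix norm, and then states that the $K$-version follows in the same way, which is exactly what you carry out. Your explicit remark about the vanishing diagonal making the row and column sums coincide with the centralities is a harmless and slightly more careful elaboration of the paper's one-line claim ``one can see that'' these norms agree.
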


\begin{proposition}
For the Perron root of the multiplex $K$-efficiency matrix and the multiplex global 
$K$-efficiency one has, for $1\leq K\leq N-1$,
$$
\rho_K\leq N(N-1)e^K_{\mathcal{A}}(\gamma).
$$
\end{proposition}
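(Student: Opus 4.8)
The plan is to reproduce, step for step, the three-part argument used for Proposition \ref{rho_gl}, replacing each object by its $K$-truncated counterpart. Concretely, the $K$-efficiency matrix $P^K_{-1}$ plays the role of the efficiency matrix $P_{-1}$, the Perron root $\rho_K$ replaces $\rho$, and the harmonic $K_{\rm in}$- and $K_{\rm out}$-centrality vectors $\h^K_{\rm in},\h^K_{\rm out}$ replace $\h_{\rm in},\h_{\rm out}$. All of these are well defined because $P^K_{-1}$ is, like $P_{-1}$, entrywise nonnegative: its off-diagonal entries are reciprocals of shortest-path lengths, with $1/\infty$ read as $0$.

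First I would invoke the immediately preceding proposition, i.e., the $K$-analogue of Proposition \ref{rho_harm}, which gives $\rho_K\leq\min\{\|\h^K_{\rm in}\|_{\infty},\|\h^K_{\rm out}\|_{\infty}\}$. Then, using the elementary norm comparison $\|\z\|_{\infty}\leq\|\z\|_1$ valid for every $\z\in\R^N$, I would upgrade this to the pair of bounds $\rho_K\leq\|\h^K_{\rm in}\|_1$ and $\rho_K\leq\|\h^K_{\rm out}\|_1$. Finally I would substitute the identity from the $K$-analogue of Proposition \ref{gl_harm}, namely $e^K_{\mathcal{A}}(\gamma)=\frac{1}{N(N-1)}\|\h^K_{\rm in}\|_1=\frac{1}{N(N-1)}\|\h^K_{\rm out}\|_1$, to conclude that $\rho_K\leq\|\h^K_{\rm in}\|_1=N(N-1)\,e^K_{\mathcal{A}}(\gamma)$, which is exactly the claim.

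There is no genuine obstacle here: the statement is the $K$-level shadow of Proposition \ref{rho_gl}, and the whole argument is a chaining of two results the excerpt has already granted together with a trivial inequality between the $\ell_{\infty}$ and $\ell_1$ norms. The only points that deserve a sentence of care are that $\h^K_{\rm in}$ and $\h^K_{\rm out}$ are nonnegative, so that their $\ell_1$ norms equal precisely the summed harmonic $K$-centralities appearing in $e^K_{\mathcal{A}}(\gamma)$, and that the bound is asserted uniformly for all $1\leq K\leq N-1$. The boundary case $K=N-1$, where $P^K_{-1}\equiv P_{-1}$ and the inequality degenerates into Proposition \ref{rho_gl} itself, then serves as a consistency check rather than as a difficulty.
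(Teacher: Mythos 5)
Your proof is correct and follows exactly the route the paper intends: the paper states that this proposition "can be shown similarly as Propositions \ref{gl_harm}, \ref{rho_harm}, and \ref{rho_gl}," and your chaining of the $K$-analogue of Proposition \ref{rho_harm}, the inequality $\|\z\|_{\infty}\leq\|\z\|_1$, and the $K$-analogue of Proposition \ref{gl_harm} mirrors the proof of Proposition \ref{rho_gl} step for step.
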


The multiplex global $K$-efficiency is expected to increase the most by increasing the 
edge-weights that make the Perron root $\rho_K$ increase the most. In fact, when analyzing
$P^{K}_{-1}$, for $1\leq K\leq N-1$, we prefer to consider the eigenvector centrality over
the degree, that is to say, the $i$th entry of $\mathbf{y}_K$ instead of the in-degree of 
vertex $v_i$ (its harmonic $K_{\rm in}$-centrality) and the $j$th entry of $\mathbf{x}_K$ 
instead of the out-degree of vertex  $v_j$ (its harmonic $K_{\rm out}$-centrality); see, e.g.,
\cite{Bo,Es,Ne} for discussions on eigenvector centrality.

\section{The popularity approach}\label{sec4} 
This section considers the multiplex Perron communicability and discusses how it can be
used to determine the importance of an edge. The sensitivity of this measure to 
perturbations of the weights is studied. The multiplex Perron communicability has
prviously been described in \cite{EHNR}. It is the aim of the present paper to
compare the performance of the techniques of this section and Section \ref{sec3}.

The evaluation of the matrix $\exp_0(B)$ is very time-consuming when the supra-adjacency 
matrix $B$ is large. We therefore are interested in estimating the multiplex total 
communicability without calculating $\exp_0(B)$ by using the Perron root $\rho$ of 
$B=B(\gamma)$, the associated Perron vectors $\mathbf{x}$ and $\mathbf{y}$, and the 
Wilkinson matrix $W_{NL}=\mathbf{y}\mathbf{x}^T$. We propose to approximate 
$tc_B(\gamma)=\one_{NL}^T\exp_0(B)\one_{NL}$ by means of the 
\emph{multiplex Perron communicability}
\[
Pc_B(\gamma)=\exp_0({\rho})\,\one_{NL}^T W_{NL}\one_{NL},
\]
which is much easier to compute; see \cite{DLCJNR,EHNR} for related discussions.
The following result holds.

\begin{proposition}[\cite{DLCJNR}]\label{t_appr}
If the Perron root $\rho$ of $B$ is significantly larger than the magnitude of
the other eigenvalues of $B$, then
\[
tc_B(\gamma)\approx\kappa(\rho)Pc_B(\gamma).
\]
\end{proposition}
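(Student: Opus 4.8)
The plan is to expand $\exp_0(B)$ through the spectral decomposition of $B$ and to recognize $\kappa(\rho)Pc_B(\gamma)$ as precisely the single term in this expansion that is associated with the Perron root $\rho$; the approximation then amounts to the assertion that all the remaining terms are negligible. Since $B$ is nonnegative and irreducible, the Perron--Frobenius theory guarantees that $\rho$ is a simple eigenvalue that is real and positive, strictly dominating the real parts of all others. I would order the eigenvalues as $\rho=\lambda_1$, $|\lambda_2|\geq\dots\geq|\lambda_{NL}|$, and let $\x_k$ and $\y_k$ be associated unit-norm right and left eigenvectors, with $\x_1=\x$ and $\y_1=\y$ the Perron vectors of Section \ref{sec2}.

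First I would write, assuming $B$ diagonalizable for ease of exposition,
\[
\exp_0(B)=\sum_{k=1}^{NL}\exp_0(\lambda_k)\,\frac{\x_k\y_k^T}{\y_k^T\x_k},
\]
which is the standard spectral-projector representation of an entire matrix function; note that each rank-one projector $\x_k\y_k^T/(\y_k^T\x_k)$ is independent of the chosen normalization of the eigenvectors. Contracting on both sides with $\one_{NL}$ gives
\[
tc_B(\gamma)=\one_{NL}^T\exp_0(B)\one_{NL}=\sum_{k=1}^{NL}\exp_0(\lambda_k)\,\frac{(\one_{NL}^T\x_k)(\y_k^T\one_{NL})}{\y_k^T\x_k}.
\]

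The next step is to isolate the $k=1$ term and match it to the claim. Using $\one_{NL}^T\x=\x^T\one_{NL}$ and $\y^T\one_{NL}=\one_{NL}^T\y$, the Perron term equals
\[
\exp_0(\rho)\,\frac{(\x^T\one_{NL})(\one_{NL}^T\y)}{\y^T\x}=\frac{\exp_0(\rho)}{\y^T\x}\,\one_{NL}^T\y\x^T\one_{NL}=\kappa(\rho)\,\exp_0(\rho)\,\one_{NL}^TW_{NL}\one_{NL}=\kappa(\rho)Pc_B(\gamma),
\]
where I have invoked $\kappa(\rho)=1/(\y^T\x)$ and $W_{NL}=\y\x^T$. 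Since $\rho$ is simple, this identity holds even without diagonalizability. Therefore
\[
tc_B(\gamma)=\kappa(\rho)Pc_B(\gamma)+\sum_{k=2}^{NL}\exp_0(\lambda_k)\,\frac{(\one_{NL}^T\x_k)(\y_k^T\one_{NL})}{\y_k^T\x_k},
\]
and it remains only to argue that the residual sum is small relative to the dominant term.

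The hard part will be making the sense of ``$\approx$'' precise, i.e. bounding the residual. The exponential weights satisfy $|\exp_0(\lambda_k)|/\exp_0(\rho)\sim\exp(\operatorname{Re}(\lambda_k)-\rho)$, and under the stated hypothesis $\rho\gg|\lambda_k|\geq\operatorname{Re}(\lambda_k)$ for every $k\geq2$, so each such ratio is exponentially small in the spectral gap. The delicate point is that for a \emph{non-normal} $B$ the eigenvector factors $(\one_{NL}^T\x_k)(\y_k^T\one_{NL})/(\y_k^T\x_k)$ are \emph{not} automatically bounded: an ill-conditioned eigenvector (a small $\y_k^T\x_k$) can amplify a subdominant term, so a fully rigorous bound must control these quantities as well. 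Granting that they remain $O(1)$, so that the exponential dominance of the Perron term is not overwhelmed, the residual is of relative order $\exp(\operatorname{Re}(\lambda_2)-\rho)$, which yields the claimed approximation $tc_B(\gamma)\approx\kappa(\rho)Pc_B(\gamma)$.
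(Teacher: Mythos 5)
Your argument is correct: the identification of the Perron term $\exp_0(\rho)\,(\one_{NL}^T\x)(\y^T\one_{NL})/(\y^T\x)$ with $\kappa(\rho)Pc_B(\gamma)$ checks out, and the spectral-decomposition route with the residual controlled by the dominance of $\rho$ is exactly the standard argument; note that the paper itself states this proposition without proof, importing it from the cited reference \cite{DLCJNR}, where essentially this same expansion is used. Your explicit caveat about the eigenvector-conditioning factors $(\one_{NL}^T\x_k)(\y_k^T\one_{NL})/(\y_k^T\x_k)$ for non-normal $B$ is the right thing to flag and is consistent with the result being stated only as an approximation under a qualitative dominance hypothesis.
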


Thus, $tc_B(\gamma)$ depends on $Pc_B(\gamma)$ and the conditioning of the Perron root 
$\rho$. In the special case of an undirected network, the Perron vectors $\x$ and $\y$
coincide and, therefore, $\kappa(\rho)=1/\y^T\x=1$. Under the assumption of Proposition 
\ref{t_appr}, we then obtain that
\[
tc_B(\gamma)\approx Pc_B(\gamma).
\]
Additionally, since $\one_{NL}^T W_{NL}\one_{NL}=\|\mathbf{x}\|_1 \|\mathbf{y}\|_1$ and, 
$\forall \mathbf{z}\in \mathbb{C}^n$,
one has $\|\mathbf{z}\|_2\leq \|\mathbf{z}\|_1\leq \sqrt{n}\|\mathbf{z}\|_2$, the 
following bounds for the multiplex Perron communicability hold.

\begin{proposition}[\cite{DLCJNR}]\label{bou_p}
\[
\exp_0({\rho})\leq Pc_B(\gamma)\leq NL\exp_0({\rho}).
\]
\end{proposition}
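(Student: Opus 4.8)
The plan is to reduce the proposition to a purely scalar estimate on the quantity $\one_{NL}^T W_{NL}\one_{NL}$, since by definition $Pc_B(\gamma)$ is exactly this number multiplied by the scalar $\exp_0(\rho)$. First I would exploit the rank-one structure of the Wilkinson matrix $W_{NL}=\y\x^T$. Because $\one_{NL}^T W_{NL}\one_{NL}=(\one_{NL}^T\y)(\x^T\one_{NL})$ and the Perron vectors $\x$ and $\y$ have strictly positive entries, each inner product against $\one_{NL}$ collapses to a $1$-norm, giving $\one_{NL}^T W_{NL}\one_{NL}=\|\x\|_1\|\y\|_1$, precisely the identity recorded in the discussion preceding the statement.

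Next I would bound the two $1$-norms separately. The Perron vectors are normalized so that $\|\x\|_2=\|\y\|_2=1$. Applying the norm equivalence $\|\z\|_2\le\|\z\|_1\le\sqrt{n}\,\|\z\|_2$ with $n=NL$ then yields $1\le\|\x\|_1\le\sqrt{NL}$ and $1\le\|\y\|_1\le\sqrt{NL}$. Multiplying these two chains of inequalities gives $1\le\|\x\|_1\|\y\|_1\le NL$, i.e., $1\le\one_{NL}^T W_{NL}\one_{NL}\le NL$.

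Finally I would multiply through by $\exp_0(\rho)$. Since $\rho>0$ for the irreducible nonnegative matrix $B$, we have $\exp_0(\rho)=\exp(\rho)-1>0$, so scaling by this factor preserves the direction of both inequalities and produces $\exp_0(\rho)\le Pc_B(\gamma)\le NL\,\exp_0(\rho)$, which is the desired claim.

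There is essentially no hard step here; the only points that require care are (i) noting that it is the positivity of the Perron-vector entries that legitimizes replacing $\one_{NL}^T\y$ and $\x^T\one_{NL}$ by $\|\y\|_1$ and $\|\x\|_1$, rather than by absolute-value sums that could be smaller, and (ii) confirming the positivity of $\exp_0(\rho)$ so that the final scaling does not flip either inequality. As a sanity check, both bounds are attainable in the degenerate regimes — the lower bound when $\x$ and $\y$ concentrate on a single coordinate, and the upper bound when they are proportional to $\one_{NL}$ — which indicates the estimates are sharp.
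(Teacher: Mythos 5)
Your argument is correct and is essentially identical to the paper's own justification, which is given in the sentence immediately preceding the proposition: the identity $\one_{NL}^T W_{NL}\one_{NL}=\|\x\|_1\|\y\|_1$ combined with the norm equivalence $\|\z\|_2\leq\|\z\|_1\leq\sqrt{n}\,\|\z\|_2$ applied to the unit-$2$-norm Perron vectors, then scaling by $\exp_0(\rho)>0$. No further comparison is needed.
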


Typically, $\exp_0({\rho})\gg NL$. Therefore, it suffices to consider $\exp_0({\rho})$ to
determine whether the multiplex Perron communicability is large or small.

\subsection{Sensitivity of multiplex total communicability}
We would like to identify the intra-layer edges that should be strengthened to enhance the
multiplex total communicability the most. The canonical way to identify the entries of the
block-diagonal portion of the supra-adjacency matrix \eqref{Bgamma}, whose weights should 
be increased, is to evaluate the Fr\'echet derivative $L_{\exp_0}(B,E)\in\R^{NL\times NL}$ 
at $B=B(\gamma)$ in the direction $E=\mathbf{e}_i\mathbf{e}_j^T\in\R^{NL\times NL}$ for 
$1\leq i,j\leq NL$. The Fr\'echet derivative $L_f(B,E)$ of a function $f$ at the matrix
$B$ in the direction $E$ is defined as
\[
f(B+E)=f(B)+L_f(B,E)+o(\|E\|_2)\mbox{~~~as~~~} \|E\|_2\rightarrow 0;
\]
see, e.g., \cite{Hi,NR4,Sc}. We are interested in determining intra-layer edges that have 
large weights, whose modification results in a relatively large change in the total 
communicability. Note that the sensitivity in the direction $\mathbf{e}_i\mathbf{e}_j^T$, 
i.e., $\mathbf{1}_{NL}^TL_{\exp_0}(B,\mathbf{e}_i\mathbf{e}_j^T)\mathbf{1}_{NL}$, is 
$\mathbf{e}_i^TL_{\exp_0}(B^T,\mathbf{1}_{NL}\mathbf{1}_{NL}^T)\mathbf{e}_j$; see 
\cite{Sc}. However, the evaluation of $L_{\exp_0}(B^T,\mathbf{1}_{NL}\mathbf{1}_{NL}^T)$ 
is very demanding (about $8$ times more arithmetic floating point operations than the 
evaluation of $\exp_0(B)$). 

We remark that one could approximate the gradient of $tc_{B}(\gamma)$ by using Arnoldi or 
Lanczos decompositions, as proposed for large-scale single-layer networks in 
\cite{Sc,NR4}. In the following subsection, we focus on another approach, that takes into
account the multiplex Perron communicability. The computations required are quite 
straightforward and not very demanding also for large-scale problems.

\subsection{Increasing the multiplex communicability}
We propose to determine the intra-layer edges with large weights whose modification yields
a relatively large change in the Perron root $\rho$ of the supra-adjacency matrix $B$. 
These intra-layer edges should be strengthened to enhance the multiplex Perron 
communicability the most. We will illustrate that modifications of the weights of the 
intra-layer edges identified by this technique give a relatively large change in the 
multiplex total communicability. 

Following \cite{NR4}, we construct an ``importance vector'' by multiplying the positive 
entries of 
\[
B_d={\rm blkdiag}[A^{(1)},\dots,A^{(L)}]
\]
element by element by the corresponding entries of $W_{NL}|_{\mathcal{S}_{B_d}}$, where 
$\mathcal{S}_{B_d} \subseteq\R^{NL \times NL}$ is the cone of all nonnegative matrices 
with the same sparsity structure as $B_d$, and then choose the weights 
$a_{ij}^{(\ell)}\in\mathcal{A}$ that correspond to the largest entries of this vector. 
Since supra-adjacency matrices typically are quite large, one generally computes their 
right and left Perron vectors by an iterative method that only requires the evaluation of
matrix-vector products with the matrix $B_d$ and its transpose. Clearly, one does not have
to store $B_d$, but only $\mathcal{A}$, to evaluate matrix-vector products with the matrix
$B_d$ and its transpose.

To estimate the potential for increase in communicability, we propose to also evaluate an
approximation of the \emph{structured multiplex Perron communicability}, which is defined 
by
\[
Pc^{\rm struct}_B(\gamma)=\exp_0({\rho})\,\one_{NL}^T W_{NL}|_{\mathcal{S}_{B_d}} 
\one_{NL}.
\]
The following result holds.

\begin{proposition}
\[
Pc^{\rm struct}_B(\gamma)\leq Pc_B(\gamma).
\]
\end{proposition}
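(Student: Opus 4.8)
The plan is to reduce the statement to the entrywise nonnegativity of the rank-one Wilkinson matrix together with the fact that projection onto $\mathcal{S}_{B_d}$ only discards entries. First I would recall that $W_{NL}=\y\x^T$, where $\x$ and $\y$ are the right and left Perron vectors of the supra-adjacency matrix $B$. Since $B$ is irreducible (as noted in Section~\ref{sec2}, irreducibility of $A^+$ forces that of $B$), the Perron--Frobenius theorem guarantees that $\x$ and $\y$ can be chosen with all entries strictly positive. Consequently every entry of the outer product $W_{NL}=\y\x^T$ is positive, so $W_{NL}$ is an entrywise nonnegative matrix.

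Next I would invoke the description of the projection $M|_{\mathcal{S}}$ given in Section~\ref{sec2}: the matrix $W_{NL}|_{\mathcal{S}_{B_d}}$ is obtained from $W_{NL}$ by setting to zero precisely those entries lying outside the sparsity pattern of $B_d$, while leaving the remaining entries unchanged. Because all of the discarded entries are nonnegative, this yields the entrywise inequalities
\[
0\leq W_{NL}|_{\mathcal{S}_{B_d}}\leq W_{NL}.
\]

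Finally, since the vector $\one_{NL}$ is nonnegative, the map $M\mapsto\one_{NL}^T M\one_{NL}$, which simply sums all entries of $M$, is monotone with respect to the entrywise order on nonnegative matrices; here it sums a subset of the nonnegative entries of $W_{NL}$. Applying it to the inequality above gives
\[
\one_{NL}^T W_{NL}|_{\mathcal{S}_{B_d}}\one_{NL}\leq\one_{NL}^T W_{NL}\one_{NL},
\]
and multiplying both sides by the positive scalar $\exp_0(\rho)=\exp(\rho)-1>0$ (positive because $\rho>0$) delivers the stated inequality $Pc^{\rm struct}_B(\gamma)\leq Pc_B(\gamma)$. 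I do not anticipate a genuine obstacle: the single point requiring care is the positivity of the Perron vectors, which is exactly what ensures that the projection removes \emph{nonnegative} rather than signed entries, so that zeroing them can only decrease the quadratic form rather than increase it.
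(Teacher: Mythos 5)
Your proof is correct and follows essentially the same route as the paper, which simply cites the entrywise inequality $W_{NL}|_{\mathcal{S}_{B_d}}\leq W_{NL}$; you merely spell out the supporting details (positivity of the Perron vectors, monotonicity of summing entries, positivity of $\exp_0(\rho)$) that the paper leaves implicit.
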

\begin{proof}
The proof follows from the inequality $W_{NL}|_{\mathcal{S}_{B_d}}\leq W_{NL}$. 
\end{proof}

Due to Proposition \ref{bou_p}, we have $Pc^{\rm struct}_B(\gamma)\leq NL\exp_0({\rho})$. 
This bound can be refined as the following result shows.

\begin{proposition}
\[
Pc^{\rm struct}_B(\gamma)\leq NL\exp_0({\rho}) \frac{\kappa^{\rm struct}(\rho)}{\kappa(\rho)}.
\]
\end{proposition}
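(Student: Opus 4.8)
The plan is to reduce the claimed bound to a single application of the Cauchy--Schwarz inequality. First I would rewrite the right-hand side using the definition of the structured condition number in \eqref{kst}. Applied to the Wilkinson matrix $W_{NL}$ and the cone $\mathcal{S}_{B_d}$, that identity reads $\kappa^{\rm struct}(\rho)=\kappa(\rho)\,\|W_{NL}|_{\mathcal{S}_{B_d}}\|_F$, so the ratio appearing in the statement collapses to
\[
\frac{\kappa^{\rm struct}(\rho)}{\kappa(\rho)}=\|W_{NL}|_{\mathcal{S}_{B_d}}\|_F.
\]
Substituting this into the asserted inequality and cancelling the common positive factor $\exp_0(\rho)$---which is legitimate because $\rho>0$ for the irreducible nonnegative matrix $B$, so $\exp_0(\rho)>0$---reduces the claim to
\[
\one_{NL}^T W_{NL}|_{\mathcal{S}_{B_d}}\one_{NL}\leq NL\,\|W_{NL}|_{\mathcal{S}_{B_d}}\|_F.
\]

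Next I would recognize both sides in terms of the entries of the nonnegative matrix $M:=W_{NL}|_{\mathcal{S}_{B_d}}$. Since $M$ is obtained by projecting the nonnegative rank-one matrix $\y\x^T$ onto the cone $\mathcal{S}_{B_d}$, it is itself nonnegative, and the left-hand side $\one_{NL}^T M\one_{NL}$ is simply the sum of all of its entries. The Frobenius norm $\|M\|_F$ is the Euclidean norm of that same collection of entries regarded as a vector $\mathbf{m}=\mathrm{vec}(M)\in\R^{(NL)^2}$. Writing $\one_{(NL)^2}$ for the all-ones vector in $\R^{(NL)^2}$, the left-hand side equals $\one_{(NL)^2}^T\mathbf{m}$, and the Cauchy--Schwarz inequality gives
\[
\one_{(NL)^2}^T\mathbf{m}\leq\|\one_{(NL)^2}\|_2\,\|\mathbf{m}\|_2=NL\,\|M\|_F,
\]
since $\|\one_{(NL)^2}\|_2=\sqrt{(NL)^2}=NL$. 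This is exactly the required inequality.

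There is no genuine obstacle here: once the ratio of condition numbers is identified with $\|W_{NL}|_{\mathcal{S}_{B_d}}\|_F$ through \eqref{kst}, the bound is a one-line consequence of Cauchy--Schwarz. The only points to keep in mind are the positivity of $\exp_0(\rho)$, so that the cancellation preserves the direction of the inequality, and the nonnegativity of $M$, so that $\one_{NL}^T M\one_{NL}$ genuinely equals the sum of the entries with no sign cancellation. I would also note that the bound is typically far from tight, because $M$ is sparse and applying Cauchy--Schwarz over all $(NL)^2$ index pairs overcounts the vanishing entries; the sharper constant $\sqrt{\mathrm{nnz}(M)}$ could replace $NL$. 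The stated constant $NL$ nonetheless suffices to refine the earlier bound $Pc^{\rm struct}_B(\gamma)\leq NL\exp_0(\rho)$.
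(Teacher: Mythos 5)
Your proof is correct and follows essentially the same route as the paper: both reduce the claim via \eqref{kst} to the inequality $\one_{NL}^T W_{NL}|_{\mathcal{S}_{B_d}}\one_{NL}=\|\mathrm{vec}(W_{NL}|_{\mathcal{S}_{B_d}})\|_1\leq NL\,\|W_{NL}|_{\mathcal{S}_{B_d}}\|_F$, which you obtain by Cauchy--Schwarz and the paper obtains from the equivalent norm bound $\|\z\|_1\leq\sqrt{n}\,\|\z\|_2$ with $n=(NL)^2$. Your remark that $NL$ could be sharpened to $\sqrt{\mathrm{nnz}(W_{NL}|_{\mathcal{S}_{B_d}})}$ is a nice observation but not part of the paper's argument.
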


\begin{proof}
Since $\one_{NL}^T W_{NL}|_{\mathcal{S}_{B_d}}\one_{NL}=
\|\mathrm {vec}(W_{NL}|_{\mathcal{S}_{B_d}})\|_1\leq NL\|W_{NL}|_{\mathcal{S}_{B_d}}\|_F$, 
we have the upper bound 
$Pc^{\rm struct}_B(\gamma)\leq NL\exp_0({\rho})\| W_{NL}|_{\mathcal{S}_{B_d}}\|_F$. The 
proof follows by observing that 
$\kappa^{\rm struct}(\rho)=\kappa(\rho)\| W_{NL}|_{\mathcal{S}_{B_d}}\|_F$; cf. eq. \eqref{kst}.
 \end{proof}

\section{Numerical tests}\label{sec5} 
The numerical tests reported in this section have been carried out using MATLAB R2024a on 
a $3.2$ GHz Intel Core i7 6 core iMac. The Perron root, and the left and right Perron 
vectors for small to moderately sized networks can easily be evaluated by using the MATLAB 
function {\sf eig}. For large-scale multiplexes, these quantities can be computed by the 
MATLAB function {\sf eigs} or by an Arnoldi algorithm (one-sided or two sided).

\subsection{Single-layers networks}
In the simple case when $L=1$, the supra-adjacency matrix $B$, the third-order tensor 
${\mathcal A}$, as well as $B_d$ and $A^+$, reduce to the adjacency matrix 
$A\in\R^{N\times N}$ for the given single-layer network. Also, 
${\mathcal S_{A^+}}\equiv{\mathcal S_{B_d}}\equiv{\mathcal S_{A}}$. Nevertheless, a few 
comments about the well-known {\em Air500} and {\em Autobahn} data sets may be of interest
to a reader since these networks allow simple illustrations of the concepts of efficiency 
and popularity.

\begin{example}[Air500 data set]
Consider the adjacency matrix $A\in\R^{500\times 500}$ for the network {\em Air500} 
in \cite{air500_autobahn}. This data set describes flight connections for the top 500 
airports worldwide based on total passenger volume. The flight connections between 
airports are for the year from 1 July 2007 to 30 June 2008. The network is represented by
a directed unweighted connected graph ${\mathcal{G}}$ with $N=500$ vertices and $24009$ 
directed edges. The vertices of the network are the airports and the edges represent 
direct flight routes between two airports. 

The global efficiency is $e_A =e_A^{499}\equiv e_A^5=4.8392\cdot 10^{-1}$, see 
\cite[Example 5]{NR3}, and the total communicability is $tc_{A}=1.9164\cdot 10^{38}$.
The Perron communicability is $Pc_{A}=1.9132\cdot 10^{38}$. The flight connection from 
the Frankfurt FRA Airport (vertex $v_{161}$) to the New York JFK Airport (vertex 
$v_{224}$) is more efficient than the flight connection from New York JFK Airport (vertex
$v_{224}$) to the Atlanta ATL Airport (vertex $v_{24}$), i.e., strengthening the former 
edge has a larger impact on the global efficiency than strengthening the latter edge. The 
latter edge is the most popular edge, i.e., increasing the weight for this edge increases 
the total communicability the most. The former edge appears in several shortest 
paths that connect airports, while the latter picks up travelers from several major 
airports and transmits them to several major airports. Note that the information provided 
by the efficiency matrix is the same as the one given by $P^{2}_{-1}$, so that
the perturbation that increases the global $2$-efficiency the most also increases the 
global efficiency the most; cf. Table \ref{tab1}.

\begin{table}[ht]
\centering
\begin{tabular}{c*{2}{|c}}
$K$           & $(h,k)$ & $e_{{A}}^K(1)$  \\
\hline
5          & ${(161,224)}$ & $4.8392\cdot 10^{-1}$     \\
4          & ${(161,224)}$ & $4.8387\cdot 10^{-1}$   \\
3          & ${(161,224)}$ & $4.7909 \cdot 10^{-1}$  \\
2          & ${(161,224)}$ & $3.6044\cdot 10^{-1}$   \\
1          & $(224,24)$ & $9.6228\cdot 10^{-2}$  \\
\end{tabular}
\caption{Air500 data set. Indices chosen by the procedure are shown in the second column 
and the global $K$-efficiency is displayed in the third column for $K=1,2,\ldots,5$.} 
\label{tab1}
\end{table}
\end{example}

\begin{example}[Autobahn data set]
Consider the undirected unweighted graph $\mathcal G$ that represents the German 
highway system network {\em Autobahn}. The graph, which is available at 
\cite{air500_autobahn}, has $N=1168$ vertices representing German locations and $1243$ 
edges representing highway segments that connect them. Therefore, the adjacency matrix
$A\in\R^{1168\times 1168}$ for this network has $2486$ nonvanishing entries. 

The global efficiency is $e_A=e_A^{1167}\equiv e_A^{62}=6.7175\cdot 10^{-2}$; see
\cite[Example 6]{NR3}. The total communicability is $tc_{A}=1.2563\cdot 10^{4}$ and the
Perron communicability is $Pc_{A}=2.2448\cdot 10^{3}$. The highway segment that connects 
Duisburg (vertex $v_{219}$) and  Krefeld (vertex $v_{565}$) turns out to be more efficient
than the highway segment that connects Duisburg (vertex $v_{219}$) and D\"usseldorf 
(vertex $v_{217}$), which instead turned out to be the most popular edge. Note that the 
information provided by the efficiency matrix is the same as the one given by
$P^{4}_{-1}$, so that the perturbation that increases the global $4$-efficiency the most 
also increases the global efficiency the most; cf. Table \ref{tab2}.

\begin{table}[ht]
\centering
\begin{tabular}{c*{2}{|c}}
$K$           & $(h,k)$ & $e_{{A}}^K(1)$  \\
\hline
62          & ${(565,219)}$ & $6.7175 \cdot 10^{-2}$     \\
\vdots  & \vdots& \vdots\\
5          & ${(565,219)}$ & $7.9991\cdot 10^{-3}$   \\
4          & ${(565,219)}$ & $6.1823\cdot 10^{-3}$   \\
3          & ${(219,217)}$ & $4.6017  \cdot 10^{-3}$  \\
2          & ${ (693,543)}$ & $3.2082 \cdot 10^{-3}$   \\
1          & $(219, 217)$ & $1.8238\cdot 10^{-3}$  \\
\end{tabular}

\caption{Autobahn data set. The second column shows indices chosen by the procedure for
$K=1,2,\ldots,5$ and $K=62$, and the third column displays the global $K$-efficiency.} 
\label{tab2}
\end{table}
 \end{example}

\subsection{Multiplex networks}
The computations in this subsection use the two-sided Arnoldi method described in 
\cite{ZH}.

\begin{example}[European airlines data set]
We consider the undirected, unweighted, and connected network consisting of $N=417$ 
vertices that represent European airports and  $L=37$ layers that represent different 
airlines operating in Europe. Each edge represents a flight between airports. The network
can be downloaded from \cite{B_repository}. 
In this multiplex all the maximal shortest paths are made up of 7 intra-layer edges 
and 2 layer switches, with  $\gamma=1$ that reflects the effort required to change 
airlines for connecting flights. Thus, for any $K\geq7$, one has 
$e_{\mathcal{A}}(1)=e_{\mathcal{A}}^{K}(1)$ with
$$
e_{\mathcal{A}}(1)=\frac{1}{N(N-1)}\sum_{i,j\ne i} \frac{1}{p_{ij}}= 0.3477.
$$
For all $K\geq3$, the largest entries of $W_{N}^K|_{\mathcal S_{A^+}}$ correspond to the 
route between  vertices $v_{40}$ and $v_{15}$. Thus, the information provided by the 
multiplex efficiency matrix $P_{-1}$ is the same as the information given by $P^{3}_{-1}$, 
so that the perturbation that increases the multiplex global $3$-efficiency the most is the 
same that increases the multiplex global efficiency the most; cf. Table \ref{tab3}. This indicates 
that the Perron root $\rho$ of $P_{-1}$ may be increased the most by doubling the number
of flights between the Barcelona (vertex $v_{40}$) and Amsterdam  (vertex $v_{15}$) airports. 
The airlines that operate this route are EasyJet (layer $3$), KLM (layer $9$), Vueling (layer $21$), 
and Transavia Holland (layer $27$), all with the same number of flights.  
\begin{table}[ht]
\centering
\begin{tabular}{c*{2}{|c}}
$K$           & $(h,k)$ & $e_{\mathcal{A}}^K(1)$  \\
\hline
7          & ${ (40,15)}$ &$3.4766\cdot 10^{-1}$     \\
6          & ${(40,15)}$ & $3.4766\cdot 10^{-1}$      \\
5          & ${(40,15)}$ & $3.4746\cdot 10^{-1}$     \\
4          & ${ (40,15)}$ & $3.4416\cdot 10^{-1}$   \\
3          & ${ (40,15)}$ & $3.1949 \cdot 10^{-1}$  \\
2          & ${(38,15)}$ & $1.8393\cdot 10^{-1}$   \\
1          & $(38,15)$ & $3.4046\cdot 10^{-2}$  \\
\end{tabular}

\caption{European airlines data set. Indices chosen by the procedure in the second column 
and multiplex global $K$-efficiency in the third column for $K=1,2,\ldots,7$.} 
\label{tab3}
\end{table}

Increasing the number of flights of each airline by 25 percent, so as to increase by one the total 
number of flights on the route, yields the perturbed third-order adjacency tensor 
${\widetilde {\mathcal{A}}}$ and the global efficiency 
$$
e_{\widetilde {\mathcal{A}}}(1)=\frac{1}{N(N-1)}
\sum_{i,j\ne i} \frac{1}{{\tilde  p}_{ij}}= 0.3480.
$$
\end{example}

As for the popularity approach, one has 
$$tc_B(1)=\one_{NL}^T \exp_0(B)\one_{NL}=2.4930\cdot 10^{20}.$$
The Perron root $\rho=38.3714$ of $B$ (with $\exp_0({\rho})=4.6183\cdot 10^{16}$, 
$\kappa(\rho)=1$ and $\kappa^{\rm struct}(\rho)=5.3310\cdot 10^{-2}$) is significantly 
larger than the other eigenvalues. We obtain
\begin{eqnarray*}
Pc_B(1)&=&\exp_0({\rho})\,\one_{NL}^T W_{NL}\one_{NL}=1.9637\cdot 10^{20},\\
Pc_B^{\rm struct}(1)&=&\exp_0({\rho})\,\one_{NL}^T W_{NL}|_{\mathcal{S}_{B_d}}
\one_{NL}=1.4733\cdot 10^{17}.
\end{eqnarray*}
The largest entries of $W_{NL}|_{\mathcal{S}_{B_d}}$ correspond to edge 
$e(v_{38}^{1}\leftrightarrow v_{2}^{1})$. This indicates that the Perron root may be 
increased the most by doubling the number of flights operated by Lufthansa airline between
the Munich (vertex $v_{38}$) and Frankfurt (vertex $v_2$) airports. Note that Lufthansa 
(layer $1$) is the only operating company for this route. For the perturbed supra-adjacency 
matrix ${\widehat B}$ one has the Perron root ${\hat \rho}=38.3798$,  with 
$\exp_0({{\hat \rho}})=4.6572\cdot 10^{16}$, and
$$tc_{\widehat B}(1)=\one_{NL}^T \exp_0(\widehat B)\one_{NL}=2.5056\cdot 10^{20}.$$

Finally, we observe that
\[
e_{\widehat {\mathcal{A}}}(1)=0.3479<0.3480= e_{{\widetilde {\mathcal{A}}}}(1);\qquad 
tc_{{\widetilde B}}(1)=2.4972\cdot 10^{20}<2.5056\cdot 10^{20}=tc_{\widehat {B}}(1).
\]

\begin{example}[London transportation data set]
Consider the undirected, weighted,  and connected network consisting of $N=369$ vertices 
that represent train stations in London and  $L=3$ layers that represent the networks of 
stations connected by

1. Tube - All underground  lines (e.g., District, Circle, etc) aggregated; 

2.  Overground;

3. Docklands Light Railway (DLR).

\noindent Each intra-layer edge represents a route between stations.  Data was collected in 
2013. The network can be downloaded from \cite{D}.

In this multiplex all the maximal shortest paths are made up of $40$ intra-layer edges and
$2$ layer switches, with  $\gamma=1$. 

Thus, for all {$K\geq 40$}, one has $e_{\mathcal{A}}^{K}(1)=e_{\mathcal{A}}(1)$, with
$$
e_{\mathcal{A}}(1)=\frac{1}{N(N-1)}\one_{N}^T P_{-1}\one_N= 0.1126.
$$
Also, for all { $K\geq10$},   the largest entries of $W_{N}^K|_{\mathcal S_{A^+}}$ correspond 
to the route between  vertices $v_{185}$ and $v_{182}$. This indicates that the Perron root  of 
the multiplex efficiency matrix, ${\rho}(P_{-1})=46.5551$, may be increased the most by adding 
a new underground  line to the $3$  lines operating  between the Euston Square ($v_{185}$) 
and King's Cross St. Pancras ($v_{182}$).  

\begin{table}[ht]
\centering
\begin{tabular}{c*{2}{|c}}
$K$           & $(h,k)$ & $e_{\mathcal{A}}^K(1)$  \\
\hline
40        & ${ (185,182)}$ & $1.1261\cdot 10^{-1}$      \\
 \vdots         & { \vdots}  &  \vdots   \\
10         & ${ (185,182)}$ & $6.4761\cdot 10^{-2}$   \\
9          & ${ (182,39)}$ & $5.8334\cdot 10^{-2}$  \\
8          & ${ (182,39)}$ & $5.1699\cdot 10^{-2}$  \\
7         & ${ (182,39)}$ & $4.4932\cdot 10^{-2}$  \\
6         & ${ (182,39)}$ & $3.7978\cdot 10^{-2}$  \\
5         & ${ (182,39)}$ & $3.1098\cdot 10^{-2}$  \\
4         & ${(185,182)}$ & $2.4577\cdot 10^{-2}$  \\
3         & ${(185,182)}$ & $1.8541\cdot 10^{-2}$  \\
 2         & ${ (182,39)}$ & $1.2894\cdot 10^{-2}$  \\
1          & ${ (182,39)}$ & $7.2464\cdot 10^{-3}$  \\
\end{tabular}

\caption{London transportation data set. The second column shows indices chosen  by the procedure for
$K=1,2,\ldots,10$ and $K=40$,  and the third column displays the multiplex global 
$K$-efficiency.} 
\end{table}

As for the popularity approach, one has $$tc_B(1)=\one_{NL}^T \exp_0(B)\one_{NL}=5.6238\cdot 10^{4}.$$

The Perron root $\rho=6.5138$ of $B$ (with $\exp_0({\rho})=6.7341\cdot 10^{2}$, 
$\kappa(\rho)=1$ and $\kappa^{\rm struct}(\rho)=5.0028\cdot 10^{-1}$) is significantly larger than the other 
eigenvalues. $$Pc_B(1)=\exp_0({\rho})\,\one_{NL}^T W_{NL}\one_{NL}=2.0831\cdot 10^{4}$$
$$Pc_B^{\rm struct}(1)=\exp_0({\rho})\,\one_{NL}^T W_{NL}|_{\mathcal{S}_{B_d}}\one_{NL}=
1.6214\cdot 10^{3}.$$ 
The largest entries of $W_{NL}|_{\mathcal{S}_{B_d}}$ correspond to the edge 
$e(v_{182}^{1}\leftrightarrow v^{1}_{39})$ in layer $1$. This indicates that the Perron root 
may be increased the most by adding a new underground line to the $3$ lines operating 
between King's Cross St. Pancras ($v_{182}$) and Farrington Station ($v_{39}$).  

Let us take into account both the efficiency and the popularity approaches, by adding the 
underground line Euston Square - King's Cross St. Pancras - Farrington Station. As for the 
perturbed multiplex, one has the Perron root $\rho({\widehat P_{-1}})=46.9491$ and
$$
e_{\widehat {\mathcal{A}}}(1)= \frac{1}{N(N-1)}\one_{N}^T \widehat P_{-1}\one_{N}= 0.1132,
$$
whereas, for the perturbed supra-adjacency matrix ${\widehat B}$, one has the Perron root  
${\hat \rho}=7.4155$,  $\exp_0(\hat \rho)=1.6606\cdot 10^{3}$, and
$$tc_{\widehat B}(1)=\one_{NL}^T \exp_0(\widehat B)\one_{NL}=7.0644\cdot 10^{4}.$$

\end{example}

In the above examples, we used the Wilkinson perturbation
associated with the Perron root of $B$ and projected onto ${\mathcal{S}_{B_d}}$ (or associated 
with the Perron root of the $P_{-1}^K$, for a suitable $K$, and projected onto ${\mathcal{S}_{A^+}}$)
to determine the edge, such that a change in its weight has the largest effect on the 
total communicability (or the global efficiency) of the multiplex. If we are interested in identifying 
more than one edge, whose weight should be changed to increase such measure of communication, 
then we can either repeat the computations with the network obtained by having modified one 
edge-weight or consider the edge that is identified by the second largest entry of the same 
matrix.

\section{Concluding remarks}\label{sec6} 
Two measures of communicability in multiplex networks are considered: 
multiplex global efficiency and multiplex total communicability. Their sensitivity to 
changes in edge-weights is investigated. Approximations of both measures can be evaluated
also for large multiplex networks. They measure different aspects of communicability and 
shed light on which edges should be strengthened (e.g., which roads should be widened) to 
increase the communicability of the network the most. Application of these concept to
single-layer and multiplex transportation networks are presented.

%
%
%
%

\end{document}